\newtheorem{defi}{Definition}[section]
\newtheorem{theorem}[defi]{Theorem}
\newtheorem{lemma}[defi]{Lemma}
\newtheorem{remark}[defi]{Remark}
\author{Marco Antei, Vikram B. Mehta}
\title{Vector Bundles over Normal Varieties Trivialized by Finite Morphisms}
\begin{document}
\maketitle

\noindent \textbf{Abstract.} Let $Y$ be a normal and projective variety over an algebraically closed field $k$ and $V$ a vector bundle over $Y$. We prove that if there exist a $k$-scheme $X$ and a finite surjective morphism  $g:X\to Y$  that trivializes $V$ then $V$ is essentially finite.
\medskip
\\\indent \textbf{Mathematics Subject Classification}:   14L15, 14J50.\\\indent
\textbf{Key words and phrases}: Essentially finite vector bundles, finite morphisms, principal bundles.

\tableofcontents
\bigskip

\section{Introduction}\label{sez:Intro} Essentially finite vector bundles over a reduced, connected and proper scheme $Y$ over a perfect field $k$ have been defined by Nori in \cite{Nor} and \cite{Nor2}. They turn out to be those vector bundles $V$ over $Y$ which are trivialized by a principal $G$ bundle $f:Z\to Y$ for a certain finite $k$-group scheme $G$ (i.e. $f^{\ast}(V)$ is trivial). In \cite{BDS} Biswas and Dos Santos prove that if $Y$ is smooth and projective and $k$ is algebraically closed then $V$ is essentially finite if and only if there exist a scheme $X$ and a finite surjective morphism  $X\to Y$ trivializing $V$. Here we prove the same property only assuming $Y$ to be normal and projective.
\\\indent \textbf{Acknowledgments} The first author would like to thank Vikram Mehta for the invitation to the Tata Institute of Fundamental Research  of Mumbai where this paper has been conceived. The second author would like to thank the ICTP, Trieste for hospitality.  We have been informed that V. Balaji and A. J. Parameswaran are also independently considering the same problem.

\section{The theorem}
Throughout the whole paper  $k$ will be an algebraically closed field and $Y$ a normal and projective variety over $k$. Let us denote by $EF(Y)$ the neutral tannakian category of essentially finite vector bundles over $Y$. The aim of this paper is to prove the following

\begin{theorem}\label{theoFinite}
Assume there exist a normal projective variety $X$ over $k$ and a finite surjective morphism  $g:X\to Y$ such that  $g^{\ast}(V)$ is trivial, then $V\in EF(Y)$.
\end{theorem}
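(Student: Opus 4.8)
The plan is to reduce the statement to the case of a curve, where it is covered by the theorem of Biswas--Dos Santos quoted above, and then to climb back up to $Y$ by a Lefschetz-type argument.

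If $\dim Y=1$ then $Y$ is a smooth projective curve (being normal of dimension one) and one is already in the situation of \cite{BDS}, so I would henceforth assume $\dim Y\ge 2$. The first step is to observe that the full subcategory $\mathcal{C}_g\subseteq\mathrm{Vect}(Y)$ of those vector bundles $W$ with $g^{\ast}W$ trivial is itself a neutral Tannakian category over $k$, with fibre functor $W\mapsto W_{y_0}$ at a chosen closed point $y_0\in Y^{\mathrm{sm}}$. The only point requiring an argument is that $\mathcal{C}_g$ is abelian: a morphism $\varphi\colon W_1\to W_2$ in $\mathcal{C}_g$ pulls back to a morphism of trivial bundles on the integral projective variety $X$, hence to a constant matrix, so $\mathrm{rk}\,\varphi$ is constant on $Y$; therefore $\ker\varphi$ and $\mathrm{coker}\,\varphi$ are vector bundles, and pulling back the resulting short exact sequences of bundles to $X$ shows that they again lie in $\mathcal{C}_g$. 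Let $G$ denote the Tannaka dual of $\mathcal{C}_g$. Since $V\in\mathcal{C}_g$ by hypothesis, it suffices to prove that $G$ is a finite group scheme: then the canonical $G$-torsor over $Y$ is a finite morphism trivialising every object of $\mathcal{C}_g$, in particular $V$, so that $V\in EF(Y)$.

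Next I would pick a general complete intersection curve $C\subseteq Y$ through $y_0$. By Bertini $C$ is a smooth irreducible projective curve, and since $\mathrm{codim}\,Y^{\mathrm{sing}}\ge 2$, a general such curve is contained in $Y^{\mathrm{sm}}$. Pulling $g$ back along $C\hookrightarrow Y$, passing to the reduced subscheme and then normalising, one obtains a finite surjective morphism $g_C\colon X'_C\to C$ from a smooth connected projective curve, and $g_C^{\ast}(V|_C)$ is trivial, being a pullback of $g^{\ast}V$. More generally, $W\mapsto W|_C$ is a tensor functor $\mathcal{C}_g\to\mathcal{C}_{g_C}$ compatible with the fibre functors at $y_0$, hence corresponds to a homomorphism $\psi\colon G_C\to G$, where $G_C$ is the Tannaka dual of $\mathcal{C}_{g_C}$.

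Working now on the curve, \cite{BDS} applied to the pair $(C,X'_C)$ shows that every object of $\mathcal{C}_{g_C}$ is essentially finite; hence $G_C$ is pro-finite, say $G_C=\varprojlim_i Q_i$ with each $Q_i$ finite. Each $Q_i$ is realised by a finite $Q_i$-torsor $T_i\to C$, and because $X'_C$ is reduced, connected and projective over the algebraically closed field $k$, the triviality after pullback of the bundle associated to a faithful representation of $Q_i$ forces $g_C^{\ast}T_i$ to be a trivial torsor; thus there is a $C$-morphism $X'_C\to T_i$. A degree count — the connected component of $T_i$ met by $X'_C$ has degree over $C$ dividing $\deg g_C$, and both the number of components of $T_i$ and its infinitesimal part are controlled in the same way — bounds $\mathrm{ord}(Q_i)$ by a function of $\deg g_C$, uniformly in $i$; since the transition maps are surjective this forces $G_C$ to be finite. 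It then remains to show that $\psi\colon G_C\to G$ is faithfully flat, for then $G$, being a quotient of the finite group scheme $G_C$, is finite, and we are done. This last point is where I expect the main obstacle to lie: it is a Lefschetz-type statement, to the effect that restriction $\mathcal{C}_g\to\mathcal{C}_{g_C}$ is fully faithful with image closed under subobjects. Since $g^{\ast}W$ trivial forces $W$, and hence every $W_1^{\vee}\otimes W_2$, to be numerically flat (nefness ascends along any morphism and descends along the finite surjective $g$), over a \emph{smooth} projective $Y$ this is part of the known Lefschetz theorems for the fundamental group scheme, the relevant inputs being that $H^0(Y,E)\to H^0(C,E|_C)$ is bijective for $E$ numerically flat and $C$ a general complete intersection curve of large degree, together with descent of sub-bundles. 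Over a merely normal $Y$ one has to re-run this argument with $C$ constrained to the smooth locus, which requires a suitable Enriques--Severi--Zariski type vanishing on $Y$ and the boundedness of numerically flat sheaves on $Y$.
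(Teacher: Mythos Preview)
Your strategy is genuinely different from the paper's, but it is incomplete at the very point you yourself flag. The Lefschetz step---that restriction $\mathcal{C}_g\to\mathcal{C}_{g_C}$ is fully faithful with image closed under subobjects, so that $\psi\colon G_C\to G$ is faithfully flat---is not proved; you only list the inputs you would need (an Enriques--Severi--Zariski vanishing and boundedness of numerically flat sheaves on a \emph{normal} $Y$) without supplying them. Even in the smooth case this Lefschetz statement is a real theorem requiring $C$ of sufficiently high degree, and the passage from smooth to normal is exactly the kind of extension the present theorem is about: you have traded one ``smooth $\Rightarrow$ normal'' problem for another of comparable weight. A secondary soft spot is the finiteness of $G_C$: your degree count gives a clean bound on the \emph{\'etale} quotients of $G_C$ (a faithfully flat quotient $Q$ yields a connected $Q$-torsor $T$ dominated by $X'_C$, so $|Q|$ divides $\deg g_C$), but the sentence ``its infinitesimal part is controlled in the same way'' is not an argument, and in characteristic $p$ this is where the content lies.

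For comparison, the paper avoids curves and Lefschetz entirely. It factors $g$ as a separable map followed by a purely inseparable one. The purely inseparable case is immediate: some iterate $F_Y^n$ factors through $g$, so $(F_Y^n)^\ast V$ is trivial and $V\in EF(Y)$. In the separable case one passes to the Galois closure with group $G$, shows by a Harder--Narasimhan argument (and stability of semistability under Frobenius base change) that $g_\ast\mathcal{O}_X$ is strongly semistable of degree $0$, hence that $g$ is Galois-\'etale over a big open $Y_0\subset Y$; then Kempf's descent lemma, applied to the trivial $G$-bundle $g^\ast V$, forces every stabiliser $G_x$ to act trivially on the fibre, and since the resulting representation $G\to GL_r$ is injective (after replacing $G$ by its image) one concludes that $G$ acts freely on all of $X$, so $g$ is \'etale and $V\in EF(Y)$. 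This is a direct geometric argument on $Y$ itself; your Tannakian/restriction-to-a-curve framework is elegant but, as it stands, relocates rather than removes the difficulty.
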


\begin{remark}This theorem holds in both zero and positive characteristic.
\end{remark}
 First we  consider two important special cases: the case where $g:X\to Y$ is purely inseparable (i.e. the extension $K(Y)\subset K(X)$ of their function fields is purely inseparable, which only occurs when $char(k)>0$) and the case where it is separable. Then it will only remain to reduce to these two cases.
 
\begin{lemma}\label{lemmaPI}
Assume there exist a normal projective variety $X$ over $k$ and a finite, surjective, purely inseparable morphism  $g:X\to Y$ such that  $g^{\ast}(V)$ is trivial, then $V\in EF(Y)$.
\end{lemma}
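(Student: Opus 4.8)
The plan is to reduce, using the normality of $Y$, to the case where $g$ is a power of the Frobenius of $Y$, and then to recognise the bundles trivialized by such a cover as representations of a finite infinitesimal group scheme. First I would use normality. Since $g$ is finite and purely inseparable, $K(Y)\subseteq K(X)$ is a finite purely inseparable extension, so there is an integer $n\geq 1$ with $b^{p^{n}}\in K(Y)$ for all $b\in K(X)$. If $b$ is a local section of $\mathcal{O}_{X}$ then $b$ is integral over $\mathcal{O}_{Y}$, hence so is $b^{p^{n}}\in K(Y)$, and therefore $b^{p^{n}}\in\mathcal{O}_{Y}$ because $Y$ is normal. Thus $b\mapsto b^{p^{n}}$ defines a homomorphism of sheaves of $\mathcal{O}_{Y}$-algebras $g_{*}\mathcal{O}_{X}\to\mathcal{O}_{Y}$, that is, a morphism $h\colon Y\to X$, and a local computation gives $g\circ h=F_{Y}^{\,n}$, the $n$-th absolute Frobenius of $Y$. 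Hence $(F_{Y}^{\,n})^{*}V=h^{*}g^{*}V\cong h^{*}\mathcal{O}_{X}^{\oplus r}=\mathcal{O}_{Y}^{\oplus r}$, with $r=\mathrm{rk}\,V$: the bundle $V$ is trivialized by a power of the absolute Frobenius of $Y$. (Normality is exactly what guarantees that $h$, a priori only a rational map, is everywhere defined; this is where we gain over the smooth hypothesis of \cite{BDS}.)

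It then suffices to prove: if $W$ is a vector bundle on the normal projective variety $Y$ with $(F_{Y}^{\,n})^{*}W$ trivial, then $W\in EF(Y)$. First, such a $W$ is Nori-semistable: for any morphism from a smooth projective curve the pull-back of $W$ becomes trivial after pull-back along the Frobenius of that curve, hence is semistable of degree $0$ by the usual slope estimate. Next I would show that the full subcategory $\mathcal{F}_{n}(Y)$ of Nori's abelian category $\mathcal{C}(Y)$ of Nori-semistable bundles, consisting of those $W$ with $(F_{Y}^{\,n})^{*}W$ trivial, is stable under $\oplus$, $\otimes$, duals and sub- and quotient objects, hence is a Tannakian subcategory, so that $\mathcal{F}_{n}(Y)\simeq\mathrm{Rep}_{k}(G)$ for an affine group scheme $G$; and that $G$ is finite and infinitesimal of height $\leq n$, whence $\mathcal{F}_{n}(Y)\subseteq EF(Y)$ and, since $V\in\mathcal{F}_{n}(Y)$, we are done. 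Stability of $\mathcal{F}_{n}(Y)$ under $\oplus$, $\otimes$ and duals is immediate, Frobenius pull-back commuting with these operations and sending trivial bundles to trivial bundles; finiteness of $G$ I would get by identifying it with the infinitesimal ``defect'' of the homomorphism on Nori fundamental group schemes induced by the $n$-fold relative Frobenius $Y\to Y^{(p^{n})}$, which is finite of degree $p^{\,n\dim Y}$, and bounding its order in terms of $n$, $\dim Y$ and the numbers $h^{i}(\mathcal{O}_{Y})$.

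The step I expect to be the main obstacle is the stability of $\mathcal{F}_{n}(Y)$ under sub-objects in $\mathcal{C}(Y)$. When $Y$ is smooth this is automatic: the relative Frobenius is then faithfully flat, Frobenius pull-back is exact, and indeed Cartier descent identifies $\mathcal{F}_{n}(Y)$ with a category of vector bundles carrying a flat connection with nilpotent $p$-curvature, which is visibly the representation category of a finite infinitesimal group scheme; this is essentially the route of \cite{BDS}. For $Y$ merely normal, $F_{Y}$ need not be flat (by Kunz's theorem it is flat precisely along the regular locus), so Frobenius pull-back is not exact and Cartier descent is not available. I would circumvent this by descending to the regular locus $Y^{\mathrm{reg}}$, whose complement has codimension $\geq 2$, and to complete-intersection curves meeting $Y^{\mathrm{reg}}$ in general points: along these the Frobenius is flat, a sub-object of a trivial bundle in $\mathcal{C}$ is trivial, and Nori-semistability is detected; one then transports the conclusion back to $Y$ using that vector bundles on a normal variety are reflexive and hence determined in codimension one.
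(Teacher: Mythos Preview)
Your first paragraph is exactly the paper's argument, with the role of normality made explicit: the paper observes that $K(X)^{(p^n)}\subset K(Y)$ for some $n$, deduces the existence of $h:Y\to X$ with $gh=F_Y^{\,n}$ (and $hg=F_X^{\,n}$), concludes that $(F_Y^{\,n})^{*}V$ is trivial, and then finishes in a single line by citing \cite{MS1} for the implication ``$(F_Y^{\,n})^{*}V$ trivial $\Rightarrow V\in EF(Y)$''. Everything after your first paragraph is therefore an attempt to reprove the cited result of Mehta--Subramanian; it is not part of the paper's own proof of this lemma.

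That attempt has genuine gaps beyond the subobject-stability issue you already flag. The crux is the finiteness of the Tannaka dual $G$ of $\mathcal{F}_n(Y)$, and your sketch does not establish it: the relative Frobenius $Y\to Y^{(p^n)}$ being finite of degree $p^{\,n\dim Y}$ does not formally bound the kernel of the induced map on Nori fundamental group schemes, and the promised bound ``in terms of $n$, $\dim Y$ and the $h^{i}(\mathcal{O}_Y)$'' is not a recognizable argument. Your codimension-two workaround for subobject stability is more plausible but also incomplete as stated: you still need that a vector bundle on a normal projective variety trivial on a big open is globally trivial, and that Nori's abelian category behaves well under restriction to $Y^{\mathrm{reg}}$. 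If you want to avoid the citation, look at \cite{MS1} directly; its route to essential finiteness of $F$-trivial bundles is different from the one you outline.
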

\begin{proof}We are in the case $char(k)=p>0$. So let us denote by $F_X:X\to X$ and $F_Y:Y\to Y$ respectively the absolute Frobenius morphisms of $X$ and $Y$.  Since $K(Y)\subset K(X)$ is purely inseparable then there exists a positive integer $n$ such that $K(X)^{(p^n)}  \subset K(Y)$. This implies that there is a morphism $h: Y \to X $  such that $gh=F_Y^n$ (i.e. the Frobenius iterated $n$ times)  and $hg =  F_X^n$. By assumption $g^* (V)$ is trivial on $X$, thus $h^*g^*(V)=(gh)^*(V)=(F_Y^{n})^{\ast}(V)$ is trivial hence $V$ is essentially finite (cf. \cite{MS1}).
\end{proof}

\begin{lemma}\label{lemmaSEP}
Assume there exist a normal projective variety $X$ over $k$ and a finite, surjective, separable morphism  $g:X\to Y$ such that  $g^{\ast}(V)$ is trivial, then $V\in EF(Y)$.
\end{lemma}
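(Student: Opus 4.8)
The plan is to reduce to the case of a (possibly ramified) Galois cover, to attach to $V$ a representation of a finite group, to organise these data into an exact tensor functor from the representations of that group to the vector bundles on $Y$, and to conclude by Nori's definition of essential finiteness.

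First I would make $g$ ``Galois''. Since $K(Y)\subseteq K(X)$ is separable, choose a finite Galois extension $L$ of $K(Y)$ containing $K(X)$, put $G=\mathrm{Gal}(L/K(Y))$, and let $\tilde X$ be the normalisation of $Y$ in $L$. Then $\tilde X$ is again a normal projective variety over $k$, the induced morphism $\tilde g\colon\tilde X\to Y$ is finite and surjective, and $G$ acts on $\tilde X$ with $\tilde X/G=Y$ — here one uses that $Y$ is normal, so that $\mathcal O_{Y}$ is exactly the sheaf of $G$-invariants of the integral closure of $\mathcal O_{Y}$ in $L$. Moreover $\tilde X$ is also the normalisation of $X$ in $L$, so $\tilde g$ factors through $g$ and $\tilde g^{*}(V)$ is trivial as well. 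Replacing $X,g$ by $\tilde X,\tilde g$, I may assume that $g\colon X\to Y$ is the quotient morphism for the action of a finite group $G$ on $X$.

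Now $g^{*}(V)$ carries its canonical $G$-equivariant structure, being pulled back from the quotient, and since $X$ is an integral projective variety over $k=\bar k$ one has $H^{0}(X,\mathcal O_{X})=k$; thus $W:=H^{0}(X,g^{*}(V))$ is a $k$-vector space of dimension $r=\mathrm{rk}\,V$ on which $G$ acts $k$-linearly, and as $g^{*}(V)$ is trivial — hence globally generated by $W$ — the evaluation map $W\otimes_{k}\mathcal O_{X}\to g^{*}(V)$ is a $G$-equivariant isomorphism of bundles. Since $(g_{*}g^{*}V)^{G}=V$ (because $Y=X/G$ and $V$ is locally free), this gives $V\cong\bigl(W\otimes_{k}g_{*}\mathcal O_{X}\bigr)^{G}$, which suggests studying the functor $\mathcal V:=\bigl(-\otimes_{k}g_{*}\mathcal O_{X}\bigr)^{G}$ from $k$-representations of $G$ to quasi-coherent $\mathcal O_{Y}$-modules. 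It sends $W$ to $V$, is $k$-linear and compatible with tensor products, and for any $Z$ obtained from $W$ by direct sums, tensor products and duals it returns a vector bundle on $Y$ together with a $G$-equivariant identification $g^{*}\mathcal V(Z)\cong Z\otimes_{k}\mathcal O_{X}$.

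The heart of the matter — and the step I expect to be the real obstacle — is to show that $\mathcal V(Z)$ is a \emph{vector bundle} for \emph{every} representation $Z$ of $\bar G$, the image of $G$ in $\mathrm{GL}(W)$ (a finite group for which $W$ is faithful). This is exactly where the hypothesis that $Y$ is only normal, not smooth as in \cite{BDS}, must be exploited: $g$ need not be flat, so a priori the invariant direct image $\mathcal V(Z)$ is only a reflexive, not locally free, sheaf. By Chevalley's theorem every object of $\mathrm{Rep}(\bar G)$ is a subquotient of a direct sum of tensor powers of $W$ and $W^{\vee}$, so it is enough to pass to subrepresentations and quotients; and if $Z'\subseteq Z$ with $\mathcal V(Z)$ already a bundle, then the subbundle $Z'\otimes_{k}\mathcal O_{X}\subseteq g^{*}\mathcal V(Z)$ defines a section over $X$ of the Grassmannian bundle $\mathrm{Gr}\bigl(\mathcal V(Z)\bigr)\to Y$ which is $G$-invariant, hence — by the universal property of the quotient $Y=X/G$, which is available because $Y$ is normal — descends to a section over $Y$, i.e. to a subbundle $\mathcal V(Z')\subseteq\mathcal V(Z)$, and then $\mathcal V(Z/Z')=\mathcal V(Z)/\mathcal V(Z')$ is a bundle as well. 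Full faithfulness of $\mathcal V$ on $\mathrm{Rep}(\bar G)$ follows from $\mathrm{Hom}_{\mathcal O_{Y}}\bigl(\mathcal V(Z_{1}),\mathcal V(Z_{2})\bigr)=\bigl(Z_{1}^{\vee}\otimes_{k}Z_{2}\bigr)^{G}=\mathrm{Hom}_{\bar G}(Z_{1},Z_{2})$ (again using $H^{0}(X,\mathcal O_{X})=k$), and exactness is then automatic, so $\mathcal V$ restricts to an exact faithful $k$-linear tensor functor $\mathrm{Rep}(\bar G)\to\mathrm{Vect}(Y)$. Since the regular representation $k[\bar G]$ is a finite object of $\mathrm{Rep}(\bar G)$ (one has $k[\bar G]^{\otimes 2}\cong k[\bar G]^{\oplus|\bar G|}$) and every representation is a subquotient of a direct sum of copies of it, $V\cong\mathcal V(W)$ is a subquotient of a direct sum of copies of the finite vector bundle $\mathcal V(k[\bar G])$, and therefore $V\in EF(Y)$ by Nori's definition \cite{Nor} (equivalently, unwinding $\mathcal V$ produces a $\bar G$-torsor over $Y$ trivialising $V$).
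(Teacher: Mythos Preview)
Your argument is correct and takes a genuinely different route from the paper's. After the common first step of passing to a Galois cover $X\to Y=X/G$, the paper proceeds geometrically: it studies the Harder--Narasimhan filtration of $g_{*}\mathcal O_{X}$, proves (via a Frobenius base-change diagram) that $g_{*}\mathcal O_{X}$ is strongly semistable of degree $0$ on a big open $Y_{0}\subseteq Y$, deduces that $X_{0}\to Y_{0}$ is honestly Galois--\'etale, and then invokes Kempf's lemma on the descent of the trivial bundle $T=g^{*}V$ to see that every stabiliser $G_{x}$ acts trivially on $T_{x}$; combined with $H^{0}(X,\mathcal O_{X})=k$ this forces the image $\bar G\subseteq GL_{r}$ to act freely on all of $X$, so a quotient of $g$ is itself a finite \'etale $\bar G$-cover trivialising $V$. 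Your Tannakian approach bypasses the stability machinery and Kempf's lemma entirely: the only geometric input is the categorical-quotient property of $Y=X/G$ (available because $Y$ is normal), which lets you descend $G$-invariant sections of Grassmannian bundles and hence show directly that $\mathcal V(Z)=(Z\otimes_{k}g_{*}\mathcal O_{X})^{G}$ is locally free for every $Z\in\mathrm{Rep}(\bar G)$. The paper's route yields the extra geometric statement that (after quotienting by $\ker\rho$) the cover is actually \'etale; your route is shorter, uniform in all characteristics, and lands immediately in Nori's framework. One cosmetic point: ``exactness is then automatic'' is slightly glib---it follows not from full faithfulness but from your Grassmannian argument, which already shows that $\mathcal V(Z')\subseteq\mathcal V(Z)$ is a subbundle with quotient $\mathcal V(Z/Z')$.
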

\begin{proof}
We may assume that $K(X)$  is normal (then Galois) over $K(Y)$   with Galois group $G$ (if it is not simply consider the normal closure of the extension $K(Y)\subset K(X)$). 
 Let $W:=(g_*\mathcal{O}_X)_{max}$ be the  maximal  semistable subsheaf of  $g_* \mathcal{O}_X$ (i.e. the first term of the Harder-Narasimhan filtration of $g_* \mathcal{O}_X$) then its slope $\mu (W) = \mu_{max} (g_*\mathcal{O}_X) = 0$: indeed since there is at least the canonical morphism  $\mathcal{O}_Y \to g_* \mathcal{O}_X$ then  in particular we have
$$0=\mu(\mathcal{O}_Y)\leq \mu_{max}(g_* \mathcal{O}_X);$$ but  $g$ is seperable then  $g^*(W)$ is still semistable; now consider the isomorphism 
$$Hom_X(g^*(W),\mathcal{O}_X)\simeq Hom_Y(W,g_*\mathcal{O}_X)\neq 0$$  from which we deduce $\mu(g^*(W))\leq 0$ hence $\mu (W) \leq 0$ (recall that  $\mu(W)=\mu(g^*(W))/deg(g)$). 


The coherent sheaf $W$ is in general only torsion free over $Y$. But it is locally free if restricted to  a big open subset $Y_0\subset Y$,   i.e. $codim_Y(Y\backslash Y_0) \geq 2$. Let $W_0:=W_{|Y_0}$ denote the vector bundle over $Y_0$,  $Sym^* (W_0^*)$  the symmetric algebra of the dual of $W_0$ and consider  $X_0:=\mathbf{Spec}(Sym^*(W_0^*))$ with its canonical map $g_0:X_0\to Y_0$.

The vector bundle  $W_0$  is strongly semistable of degree $0$ over $Y_0$: let us denote by $F_{X_0}$ and $F_{Y_0}$ respectively the absolute Frobenius morphisms of $X_0$ and $Y_0$ and  assume $W$ is not strongly semistable then there exists  a subsheaf  $U$ of $F_{Y_0}^* (W)$
such that $deg (U) > 0$.   Let $X_1$ be the fiber product of  $g_0 : X_0 \to Y_0$  and $F_{Y_0}$. It is an integral scheme. We denote by $pr:X_1 \to X_0$ and $g_1:X_1 \to Y_0$ the  projections and also $h:X_0\to X_1$ the normalization map:  

$$\xymatrix{
 {X_0} \ar@/_/[ddr]_{g_0} \ar@/^/[drr]^{F_{X_0}}
   \ar[dr]|-{h}            \\
  & X_1 \ar[d]_{g_1} \ar[r]^{pr}
                 & X_0 \ar[d]^{g_0}       \\
  & Y_0 \ar[r]^{F_{Y_0}}   & Y_0.                }
$$


Now  $U\subseteq F_{Y_0}^* ({g_0}_* ( \mathcal{O}_{X_{0}}))   =  {g_1}_*  (pr^* ( \mathcal{O}_{X_0}))  =  {g_1} _* ( \mathcal{O}_{X_{1}})$.    But from $\mathcal{O}_{X_1}\hookrightarrow h_*(\mathcal{O}_{X_0})$ we obtain ${g_1}_*(\mathcal{O}_{X_1})\hookrightarrow {g_1}_*(h_*(\mathcal{O}_{X_0}))={g_0}_* (\mathcal{O}_{X_0}) $ the latter being semistable whence a contradiction.
As a consequence  we have a homomorphism of $\mathcal{O}_{Y_0}$-algebras ${g_0}_*(\mathcal{O}_{X_0})\simeq W_{0}$  (cf. also \cite{BP}, \S 6).

Since ${g_0}_{\ast}(\mathcal{O}_{X_0})$  is semistable of slope $0$ over $Y_0$ then   $X_0$ is a Galois-étale cover over $Y_0$, the Galois group of $g_0$ still being $G$. Now let us fix some notations: recall that by assumption $V$ is a vector bundle over $Y$ such that $T:=g^{\ast}(V)$ is trivial on $X$; we set $V_0:=V_{|Y_0}$ and $T_0:=g_0^{\ast}(V_0)$ so the latter is also trivial on $X_0$. Since $g_0$ is a Galois-étale cover
then  $T_0$ is a $G$-bundle on $X_0$. But  $X_0$ is a big   open set in  $X$ thus $G$ acts on $X$ and then $G$ acts also on $T$.   Since $T$ is a $G$-bundle  then we go on as follows: we have $X / G \simeq Y$   and the trivial bundle  $T$ on $X$ descends to $Y$. So by Kempf's lemma  (cf. for example \cite{DN}, Th\'eor\`eme 2.3), for all $x$ in $X$, the stabilizer $G_x$   acts trivially on the fibre  $T_x$. But $T$ is trivial and both $X$ and $X_0$ have no global sections except constants, this means that there is a map 

$$\rho:G \to GL(T_x)=GL_r$$  over   $X$, where $r := rank (T)$.   Now let us assume for a moment that $G$ acts faithfully on $X$ so that the map $\rho: G \to GL_r$   is injective.  We already know that $G$ acts freely on $X_0$. So let us take $x\in X\backslash X_0$: since $G_x$ is a subgroup of $G$ then $G_x$ has to be trivial.   This  proves  that $G$ acts freely on $X$. So $g : X \to Y$ is a Galois-étale cover.  So $V$ is in $EF(Y)$. Up to now we have assumed $\rho$ to be injective. If it is not, i.e. if $G$ does not act faithfully on $X$  then just consider $H:= im (\rho)$, then consider the contracted product $X':=X\times^G H$, i.e. $X'=X/(ker(\rho))$, which is provided with a faithfull $H$-action and clearly $Y\simeq X'/H$. Hence $H\to GL_r$ is injective, $V$ is trivial over $X'_0:=X_0\times^G H$ and we proceed as before.

\end{proof}
 
 We are now ready to prove the principal result:

\begin{proof} \emph{of Theorem} \ref{theoFinite}: if $char (k)=0$ then lemma \ref{lemmaSEP} is sufficient to conclude. So let us assume $char(k)=p>0$: if  $g$ is purely inseparable then lemma \ref{lemmaPI} is enough to conclude. Otherwise, if $g$ is arbitrary,  we argue as follows: again we may assume that $K(X)$  is normal over $K(Y)$   with Galois group $G$. It is known that $L:=K(X)^G$     is a proper purely inseparable field extension of  $K(Y)$ while $K(X)$ is separable over $L$, then Galois.  Let $Z$ be the integral closure of $Y$ in $L$, then $g:X\to Y$ factors through the maps $s: X \to Z$ and $t : Z \to Y$ (i.e. $ts=g$) where $t : Z \to Y$ is purely inseparable and $s : X \to Z$ is separable. By lemma \ref{lemmaSEP} the vector bundle $W:=t^*(V)\in EF(Z)$ because $s^{\ast}(W)$ is trivial on $X$. As we did for lemma \ref{lemmaPI}, there exists a morphism $h:Y\to Z$ such that $h^*t^*(V)=(th)^*(V)=(F_Y^{n})^{\ast}(V)$ for some integer $n$; but $h^*(W)\in EF(Y)$ thus $(F_Y^{n})^{\ast}(V)\in EF(Y)$ then there exists $m\geq n$ such that $(F_Y^{m})^{\ast}(V)$ is Galois-étale trivial (i.e. there exists a Galois-étale cover $j:Y'\to Y$ such that $j^*((F_Y^{m})^{\ast}(V))$ is trivial on $Y'$) and that is enough to conclude that $V$ is essentially finite on $Y$. 
\end{proof}


\begin{thebibliography}{99}

\bibitem{BP} {\sc V. Balaji, A. J. Parameswaran}, \emph{An Analogue of the Narasimhan-Seshadri Theorem and some Applications}, arXiv:0809.3765v2.

\bibitem{BDS} {\sc I. Biswas, J. P. P. Dos Santos}, \emph{Vector Bundles Trivilized by Proper Morphisms and the Fundamental Group Scheme},  arXiv:0912.3572v1.

\bibitem{DN} {\sc J.-M. Drezet, M. S. Narasimhan}, \emph{Groupe de Picard des vari\'et\'es de modules
de fibr\'es semi-stables sur les courbes alg\'ebriques}, Invent. math. 97, 53-94 (1989)


%
\bibitem{MS1} {\sc V.B. Mehta, S. Subramanian}, \emph{On the fundamental group scheme}, Invent. math. 148, 143-150 (2002).
%
%
\bibitem{Nor} {\sc  M.V. Nori}, \emph{On the Representations of the Fundamental Group}, Compositio Matematica, Vol. 33,
Fasc. 1, (1976). p. 29-42.

\bibitem{Nor2} {\sc  M.V. Nori}, \emph{The Fundamental Group-Scheme}, Proc. Indian Acad. Sci. (Math. Sci.), Vol. 91,
Number 2, (1982), p. 73-122.
%
%

\end{thebibliography}

\medskip

\scriptsize

\begin{flushright} Marco Antei\\ 
E-mail: \texttt{antei@math.univ-lille1.fr}\\ \texttt{marco.antei@gmail.com}\\
\end{flushright}

\begin{flushright} Vikram B. Mehta\\ 
School of Mathematics,\\ Tata Institute of Fundamental Research,\\ Homi Bhabha Road,
Bombay 400005, India\\
E-mail:  \texttt{vikram@math.tifr.res.in}\\
\end{flushright}

\end{document}